\newtheorem{theorem}{Theorem}[section]
\newtheorem{remark}{Remark}[section]
\newtheorem{definition}{Definition}[section]
\newtheorem{example}{Example}[section]
\begin{document}
\begin{frontmatter}

%% Title, authors and addresses
%% use the tnoteref command within \title for footnotes;
%% use the tnotetext command for the associated footnote;
%% use the fnref command within \author or \address for footnotes;
%% use the fntext command for the associated footnote;
%% use the corref command within \author for corresponding author footnotes;
%% use the cortext command for the associated footnote;
%% use the ead command for the email address,
%% and the form \ead[url] for the home page:
%%
%% \title{Title\tnoteref{label1}}
%% \tnotetext[label1]{}
%% \author{Name\corref{cor1}\fnref{label2}}
%% \ead{email address}
%% \ead[url]{home page}
%% \fntext[label2]{}
%% \cortext[cor1]{}
%% \address{Address\fnref{label3}}
%% \fntext[label3]{}

\title{The equivalence between doubly nonnegative relaxation and semidefinite relaxation
for binary quadratic programming problems\tnoteref{s1}}
 \tnotetext[s1]{This work is supported by National
Natural Science Foundation of China (Grant No. 11071158)}

\author[shu]{Chuan-Hao Guo\corref{cor1}}
\ead{guo-ch@live.cn}

\author[shu]{Yan-Qin Bai\corref{cor2}}
\ead{yqbai@shu.edu.cn}

\author[shu]{Li-Ping Tang\corref{cor2}}
\ead{tanglipinggs@163.com}

%\cortext[cor2]{Corresponding author}

\address[shu]{Department of Mathematics, Shanghai University,
Shanghai 200444, China}

\begin{abstract}
It has recently been shown (Burer, Math. Program Ser. A 120:479-495,
2009) that a large class of NP-hard nonconvex quadratic programming
problems can be modeled as so called completely positive programming
problems, which are convex but still NP-hard in general. A basic
tractable relaxation is gotten by doubly nonnegative relaxation,
resulting in a doubly nonnegative programming. In this paper, we
prove that doubly nonnegative relaxation for binary quadratic
programming (BQP) problem is equivalent to a tighter semidifinite
relaxation for it. When problem (BQP) reduces to max-cut (MC)
problem, doubly nonnegative relaxation for it is equivalent to the
standard semidifinite relaxation. Furthermore, some compared
numerical results are reported.
\end{abstract}

\begin{keyword} binary quadratic programming\sep semidefinite relaxation\sep completely positive programming\sep doubly
nonnegative relaxation\sep max-cut problem

\MSC  90C10\sep 90C26\sep 49M20
\end{keyword}
\end{frontmatter}
%%%%%%%%%%%%%%%%%%%%%%%%%%%%%%%%%%%%%%%%%%%%%%%%%%%%%%%%%%%%%%%%%%%%%%%%%%%%%%%%%%%%%%%%%%%%%%%%%%%%%%%%%%%%%%%%%%%%%%%%%%%%%%%%%%%
\section{Introduction}\label{sec: Introduction}
%%%%%%%%%%%%%%%%%%%%%%%%%%%%%%%%%%%%%%%%%%%%%%%%%%%%%%%%%%%%%%%%%%%%%%%%%%%%%%%%%%%%%%%%%%%%%%
In this paper, we consider the following binary quadratic
programming problem
$$\rm{(BQP)}\ \ \ \ \
\begin{array}{lll}
&\min &x^TQx+2c^Tx\\
&{\rm s.t.} &a_i^Tx=b_i,\ i=1,2,\ldots,m,\\
&&x\in\{-1,1\}^n,\\
 \end{array}
 $$
  where $x\in R^n$ is the variable, $Q\in R^{n\times n}$, $c\in R^n$, $a_i\in R^n$ and $b_i\in R$ for all $i\in
  I:=\{1,2,\dots,m\}$ are the data. Without loss of generality, $Q$ is symmetric,
  and we assume $Q$ is not positive semidefinite, which implies
  generally that problem (BQP) is nonconvex and NP-hard
  \cite{pv1991}.

  Problem (BQP) arises in many applications, such as financial
analysis \cite{my1980}, molecular conformation problem \cite{pr1994}
and cellular radio channel assignment \cite{cs1995}. Many
combinatorial optimization problems are special cases of problem
(BQP), such as max-cut problem \cite{gw1995}. For solving this type
of problem, a systematic survey of the solution methods can be found
in Chapter 10 in \cite{ls2006} and the references therein.

It is well-known that semidefinite relaxation (SDR) is a powerful,
computationally efficient approximation technique for a host of very
difficult optimization problems, for instance, max-cut problem
\cite{gw1995}, Boolean quadratic program \cite{prw1995}. It also has
been at the center of some of the very exciting developments in the
area of signal processing and communications
\cite{mdwlc2002,msjcc2009}. The standard SDR for problem (BQP) is as
follows:
$$\rm{(SDR)}\ \ \ \ \
\begin{array}{lll}
&\min &X\bullet Q+2c^Tx\\
&{\rm s.t.} &a_i^Tx=b_i,\ \forall i\in I,\\
&&a_i^TXa_i=b_i^2,\ \forall i\in I,\\
&&X_{ii}=1,\ \forall i=1,2,\ldots,n,\\
&&X\succeq 0,
 \end{array}
 $$ where symbol $\bullet$ denotes the trace for any two conformal
 matrices. It is obviously that problem (SDR) is convex and gives a lower bound
 for problem (BQP) if the feasible set of problem (BQP) is nonempty.
 Moreover, if the optimal solution $(x^*,\ X^*)$ for problem (SDR) satisfy
 $X^*=x^*(x^*)^T$, then we can conclude that $x^*$ is an optimal
 solution for problem (BQP).

Recently, Burer \cite{b2009} proves that a large class of NP-hard
nonconvex quadratic programs with a mix of binary and continuous
variables can be modeled as so called completely positive programs,
which are convex but still NP-hard in general. In order to solve
such convex programs efficiently, a computable relaxed problem is
obtained by approximation the completely positive matrices with
doubly nonnegative matrices, resulting in a doubly nonnegative
programming \cite{b2010}, which can be efficiently solved by some
popular packages. For more details and developments of this
technique, one may refer to \cite{b2009,b2010,b2012,gy2010} and the
references therein.

In this paper, a tighter SDR problem and a doubly nonnegative
relaxation (DNNR) problem for problem (BQP) are established,
respectively, according to the features of the constraints in
problem (BQP) and the techniques of DNNP. And, we prove that doubly
nonnegative relaxation for problem (BQP) is equivalent to the
tighter semidefinite relaxation for it. Applying this result to
max-cut (MC) problem, it is shown that doubly nonnegative relaxation
for problem (MC) is equivalent to the standard semidefinite
relaxation for it. Moreover, some compared numerical results are
reported to illustrate the features of doubly nonnegative relaxation
and semidefinite relaxation, respectively.

The paper is organized as follows. In Section \ref{sec: New
relaxation for problem (BQP)}, a new tighter semidefinite relaxation
for problem (BQP) is proposed in Section \ref{sec: New tighter SDR
for problem (BQP)}. Problem (BQP) is relaxed to a doubly nonnegative
programming problem in Section \ref{sec:Doubly nonnegative
relaxation for problem (BQP)}. Section \ref{sec: Relationship
between relaxation problems} and Section \ref{sec: application to
max-cut problem} are devoted to show the equivalence of two
relaxation problems for problem (BQP) and problem (MC),
respectively. %The equivalence of two relaxation problems for problem
%(BQP) are proved in Section \ref{sec: Relationship between
%relaxation problems}. In Section \ref{sec: application to max-cut
%problem}, the main conclusions are obtained in Section \ref{sec:
%Relationship between relaxation problems} are applied to (MC)
%problem, the results show that doubly nonnegative relaxation for
%(MC) problem is equivalent to the standard semidefinite relaxation
%for it.
Some conclusions are given in Section \ref{sec:
conclusions}.

%%%%%%%%%%%%%%%%%%%%%%%%%%%%%%%%%%%%%%%%%%%%%%%%%%%%%%%%%%%%%%%%%%%%%%%%%%%%%%%%%%%%%%%%%%%%
%\subsection{Notation and terminology}\label{sec: Notation and terminology} Let $S^n$
%denotes the cone of symmetric matrices, $S^n_+$ the cone of $n\times
%n$ positive semidefinite matrices, $(S^n)^+$ the cone of $q\times q$
%nonnegative symmetric matrices and $e$ the vector of ones with
%appropriate dimension. $A\bullet B:={\rm trace}(A^TB)$ for two
%conformal matrices $A$ and $B$.
%and the symbol $\circ$ indicates the
%Hadamard (elementwise) product of vectors.
%For $A\in S^n$, the
%vector ${\rm{diag}}(S)\in R^n$ is the diagonal of $A$, while the
%adjoint operator ${\rm{Diag}}(v)={\rm{diag}}^*(v)$ is the diagonal
%matrix with diagonal formed from the vector $v\in R^n$.

%%%%%%%%%%%%%%%%%%%%%%%%%%%%%%%%%%%%%%%%%%%%%%%%%%%%%%%%%%%%%%%%%%%%%%%%%%%%%%%%%%%%%%%%%%%%%%%%%%%%%%%%%%%%%%%%%%%%%%%%%%%%%%%%%%%%%%%
%\section{Preliminaries}\label{sec: preliminaries}
%%%%%%%%%%%%%%%%%%%%%%%%%%%%%%%%%%%%%%%%%%%%%%%%%%%%%%%%%%%%%%%%%%%%%%%%%%%%%%%%%%%%%%%%%%%%
%%%%%%%%%%%%%%%%%%%%%%%%%%%%%%%%%%%%%%%%%%%%%%%%%%%%%%%%%%%%%%%%%%%%%%%%%%%%%%%%%%%%%%%%%%%%%%%%%%%%%%%%%%%%%%%%%%%%%%%%%%%%%%%%%%%%%%%
\section{New relaxation for problem (BQP)}\label{sec: New relaxation for problem (BQP)}
%%%%%%%%%%%%%%%%%%%%%%%%%%%%%%%%%%%%%%%%%%%%%%%%%%%%%%%%%%%%%%%%%%%%%%%%%%%%%%%%%%%%%%%%%%%%%%%%%%%%%%%%%
\subsection{New tighter SDR for problem (BQP)}\label{sec: New tighter SDR for problem
(BQP)}
%%%%%%%%%%%%%%%%%%%%%%%%%%%%%%%%%%%%%%%%%%%%%%%%%%%%%%%%%%%%%%%%%%%%%%%%%%%%%%%%%%%%%%%%%%%%%%%%%%%%%%%%%
First, note that problem (BQP) also can be relaxed to the following
problem by SDR
$$\widetilde{\rm{(SDR)}}\begin{array}{lll}
&\min &X\bullet Q+2c^Tx\\
&{\rm s.t.} &a_i^Tx=b_i,\ \forall i\in I,\\
&&a_i^TXa_i=b_i^2,\ \forall i\in I,\\
&&X_{ii}=1,\ \forall i=1,2,\ldots,n,\\
&&X-xx^T\succeq 0.
 \end{array}$$
If the optimal solution $(x^*,\ X^*)$ for problem
$\widetilde{\rm{(SDR)}}$ satisfy
 $X^*=x^*(x^*)^T$, it holds that $x^*$ also is an optimal
 solution for problem (BQP). On one hand, it is worth noting that
 \begin{equation}\label{e: conic relaxation}
X-xx^T\succeq 0\Longrightarrow X\succeq 0
\end{equation}
holds always, which further implies that any feasible solution of
problem $\widetilde{\rm{(SDR)}}$ is also feasible for problem (SDR).
It follows that $\rm{Opt(SDR)}\leq\rm{Opt(\widetilde{\rm{SDR)}}}$
since the two problems have the same objective functions, where
$\rm{Opt(\ast)}$ denotes the optimal value for problem $(\ast)$.
Therefore, we can conclude that problem $\widetilde{\rm{(SDR)}}$ is
a tighter SDR problem for problem (BQP) than problem (SDR).

On the other hand, we can easily verify that the constraint
$X-xx^T\succeq 0$ is nonconvex, since the quadratic term $-xx^T$ is
nonconvex. Thus, problem $\widetilde{\rm{(SDR)}}$ is nonconvex and
not solved by some popular packages for solving convex programs. In
order to establish the convex representation for problem
$\widetilde{\rm{(SDR)}}$, a crucial theorem is given below and the
details of its proof can be seen in Appendix A.5.5 Schur complement
in \cite{bv2004}.

\begin{theorem}\label{theo: Schur complement}
Let matrix $M\in S^n$ is partitioned as
$$M=\left[\begin{array}{ll}A & B\\ B^T & C\end{array}\right].$$ If
$\rm{det}A\neq 0$, the matrix $H=C-B^TA^{-1}B$ is called the Schur
complement of $A$ in $M$. Then,
we have the following relations:\\
(i) $M\succ 0$ if and only if $A\succ 0$ and $H\succ 0$.\\
(ii) If $A\succ 0$, then $M\succeq 0$ if and only if $H\succeq 0$.
\end{theorem}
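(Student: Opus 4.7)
The plan is to reduce both parts of the theorem to a single block congruence factorization. Since $A$ is invertible by hypothesis, a direct block multiplication verifies
$$M = \left[\begin{array}{cc} I & 0 \\ B^T A^{-1} & I \end{array}\right] \left[\begin{array}{cc} A & 0 \\ 0 & H \end{array}\right] \left[\begin{array}{cc} I & A^{-1} B \\ 0 & I \end{array}\right],$$
where $H = C - B^T A^{-1} B$. Writing the unit lower block-triangular factor on the left as $L$, this expresses $M = L\, \mathrm{diag}(A,H)\, L^T$ as a congruence via the invertible matrix $L$.

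Next I would invoke the elementary fact that congruence by an invertible matrix preserves both strict and non-strict positive definiteness. Concretely, for any vector $x \in R^{n}$ the substitution $y = L^T x$ gives the identity $x^T M x = y^T \mathrm{diag}(A,H)\, y$, and as $x$ ranges over all (nonzero) vectors, so does $y$. This immediately yields that $M \succ 0$ iff both $A \succ 0$ and $H \succ 0$, which is part (i); and, under the assumption $A \succ 0$ in part (ii), $M \succeq 0$ iff $\mathrm{diag}(A,H) \succeq 0$ iff $H \succeq 0$.

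The separate sign statements on $A$ and $H$ can also be extracted directly from the identity without invoking a general inertia theorem: taking $x = (u, 0)$ recovers $u^T A u$ on the left-hand side, while taking $x = (-A^{-1} B v, v)$ makes the first block component of $L^T x$ vanish and isolates $v^T H v$. I do not expect a serious obstacle here, since the argument is purely algebraic; the points that require care are keeping the strict and non-strict inequalities aligned in both directions and checking that the hypotheses $\det A \neq 0$ (implicit in (i), since otherwise $H$ is undefined) and $A \succ 0$ in (ii) are precisely what the factorization needs.
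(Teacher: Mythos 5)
Your proof is correct and complete. One thing to be aware of: the paper itself contains no proof of this theorem --- it is stated as a known fact, with the reader referred to Appendix A.5.5 of \cite{bv2004} for the details --- so there is nothing in the paper to diverge from. Your block-congruence factorization $M = L\,\mathrm{diag}(A,H)\,L^{T}$ with unit lower block-triangular $L$ is the standard argument, and the verification checks out: the multiplication does recover $M$ (using the symmetry of $A$, so that the right factor is indeed $L^{T}$), the substitution $y = L^{T}x$ with $L^{T}$ invertible transfers both $\succ 0$ and $\succeq 0$ faithfully in both directions, and in part (ii) the hypothesis $A \succ 0$ supplies exactly the $A \succeq 0$ needed to pass from $\mathrm{diag}(A,H)\succeq 0$ to $H \succeq 0$ and back. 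It is also worth noting that your argument is essentially the same one that underlies the cited reference: Boyd and Vandenberghe derive these characterizations by minimizing the quadratic form over the first block of variables, and the minimizer $u^{*} = -A^{-1}Bv$ of that partial minimization is precisely the test vector $x = (-A^{-1}Bv,\, v)$ you use to isolate $v^{T}Hv$, so the two routes are the same computation organized differently --- the congruence version has the small advantage of giving part (i) and part (ii) simultaneously from one identity, and of making the invertibility hypothesis $\det A \neq 0$ (rather than the stronger $A \succ 0$) visibly sufficient for the factorization itself.
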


According to Theorem \ref{theo: Schur complement}(ii) and (\ref{e:
conic relaxation}), it holds immediately that
\begin{equation}\label{e: conic equivalence}
\left[\begin{array}{ll} 1 & x^T \\ x & X
\end{array}\right]\succeq 0\Longleftrightarrow X-xx^T\succeq
0\Longrightarrow X\succeq 0,
\end{equation}
i.e., the constraint $X-xx^T\succeq 0$ can be equivalently
reformulated as $\left[\begin{array}{ll}1 & x^T\\ x &
X\end{array}\right]\succeq 0$, which is not only convex, but also
computable. So, problem $\widetilde{\rm{(SDR)}}$ is equivalently
reformulated as follows:
$${\rm{(SDR1)}}\begin{array}{lll}
&\min &X\bullet Q+2c^Tx\\
&{\rm s.t.} &a_i^Tx=b_i,\ \forall i\in I,\\
&&a_i^TXa_i=b_i^2,\ \forall i\in I,\\
&&X_{ii}=1,\ \forall i=1,2,\ldots,n,\\
&&\left[\begin{array}{ll}1 & x^T\\ x & X\end{array}\right]\succeq 0,
 \end{array}$$
which is not only convex in form, also can be efficiently solved by
some popular packages for solving convex programs.

Here, some examples are given to show that problem (SDR1) is a
tighter relaxation problem compared to problem (SDR), and the
corresponding numerical results further show that problem (SDR1) is
more efficient than problem (SDR). These examples are solved by
\texttt{CVX}, a package for specifying and solving convex programs
\cite{gb2011}.

\begin{example}\label{ex: 1_sdr_sdr1}This is a two dimensional nonconvex problem with one linear equality
constraint, the corresponding coefficients are selected as follows:
$$Q=\left[\begin{array}{cc}0 & -3 \\ -3 & -20\end{array}\right],\
c=\left[\begin{array}{cc}-8\\9\end{array}\right],\ \ A=[10,\ -10],\
\ b=0,$$ where $A=[a_1,a_2,\ldots,a_m]^T$,
$b=[b_1,b_2,\ldots,b_m]^T$.

On one hand, we use \texttt{CVX} to solve problem (SDR), then we
obtain $\rm{Opt(SDR)}=-\infty$, since problem (SDR) is unbounded
below. On the other hand, when problem (SDR1) is solved, it follows
that $\rm{Opt(SDR1)}=-28$ with $X=\left[\begin{array}{cc}1 & 1 \\ 1
& 1\end{array}\right]$ and $x=[-1,\ -1]^T$. Note that the
relationship $X=xx^T$ holds, thus we can conclude that $x=[-1,\
-1]^T$ also is an optimal solution for problem (BQP). The results
show that problem (SDR1) is more tighter and efficient than problem
(SDR) for this problem.
\end{example}

\begin{example}\label{ex: 2_sdr_sdr1}This problem is five dimensions with three linear equality
constraints, the corresponding coefficients are chosen as follows:
$$Q=\left[\begin{array}{cccccc}-52  &  31  &  49  &  -7  &   4\\
    31  & -16  & -50 &  -13  & -49\\
    49   &-50  &   8  &  44  & -30\\
    -7  & -13  &  44   & 36   & 12\\
     4  & -49  & -30   & 12  &  56\end{array}\right],\ \ \ \ \
c=\left[\begin{array}{cc}-20\\
    37\\
    43\\
    25\\
    -6\end{array}\right],$$
$$A=\left[\begin{array}{ccccc}4  &  10   & 29  &  14  &
    -36\\
    38  &   9   &  1  & -17  &  23\\
    48  &  39   &  5  & -17  & -13\end{array}\right],\ \ \ \ \
\ b=\left[\begin{array}{ccc}11\\
   -50\\
   -36\end{array}\right].$$
If this problem is solved by \texttt{CVX} with problem (SDR), then
it returns $\rm{Opt(SDR)}=-\infty$ since problem (SDR) is unbounded
below. When we use problem (SDR1) to solve this problem, we have
$\rm{Opt(SDR1)}=-307.548$, however, the relationship $X=xx^T$ is not
holds for this problem. Therefore, we obtain a tighter lower bound
$-307.548$ for original problem. These results also show that
problem (SDR1) is more effective than problem (SDR).
\end{example}

%\begin{remark}\label{re: sdr_sdr1}
%Except two above illustrated problems,
%\end{remark}

In fact, the constraint $x\in\{-1,+1\}^n$ in problem (BQP) further
imply that the following relationship
\begin{equation}\label{e: cut1}(1-x_i)(1-x_j)\geq 0\Rightarrow 1-x_i-x_j+X_{ij}\geq 0,\
\forall 1\leq i\leq j\leq n
\end{equation}
always hold. Combing with (\ref{e: cut1}) in problem (SDR1), we get
the following new semidefinite relaxation problem
$$\rm{(SDR2)}\begin{array}{lll}
&\min &X\bullet Q+2c^Tx\\
&{\rm s.t.} &a_i^Tx=b_i,\ \forall i\in I,\\
&&a_i^TXa_i=b_i^2,\ \forall i\in I,\\
&&X_{ii}=1,\ \forall i=1,2,\ldots,n,\\
&&1-x_i-x_j+X_{ij}\geq 0,\ \forall 1\leq i\leq j\leq n,\\
&&\left[\begin{array}{ll}1 & x^T\\ x & X\end{array}\right]\succeq 0.
 \end{array}$$

The above semidefinite relaxation problem $\rm{(SDR2)}$ is more
tighter than problem $\rm{(SDR1)}$ in form, since $\frac{n(n+1)}{2}$
inequality constraints are added into corresponding problem
$\rm{(SDR2)}$. Furthermore, we will prove that problem (SDR2) is
equivalent to another convex relaxation problem for problem (BQP) in
Section \ref{sec: Relationship between relaxation problems}.

Now, we test some problems to show that problem (SDR2) is tighter
than problem (SDR1) from the computational point of view. These
problems are of one of two types:

\begin{table}[htbp] \centering \footnotesize\caption{Statistics of the test problems}
\begin{tabular}{ lllllllllllllllllllllllllll}
 \hline\hline
  Type   &  &Instances  &  &$n$  & &$m$  & &Function \\
 \hline
 RdnBQP  &  &$50$       &  &$50$ & &$20$ & &\texttt{randn($\cdot$)}\\
 RdiBQP  &  &$50$       &  &$50$ & &$20$ & &\texttt{randi($[-10, 10], \cdot$)}\\
 \hline\hline
\end{tabular}
\label{tab: Statistics for sdr1_sdr2}
\end{table}

$\bullet$\ \ {\footnotesize\textbf{RdnBQP}}. We generate $50$
instances of problem (BQP) by MATLAB function
\texttt{randn($\cdot$)}. The symmetric matrix $Q$ is generated by
\texttt{tril(randn($\cdot$),\\-1)+triu(randn($\cdot$)',0)}, and all
instances are nonconvex.

$\bullet$\ \ {\footnotesize\textbf{RdiBQP}}. $50$ instances of
problem (BQP) are generated by MATLAB function \texttt{randi([-10,
10], $\cdot$)}. The symmetric matrix $Q$ is generated by
\texttt{randn($\cdot$)+randn($\cdot$)'}. Each element in the data
coefficients is a random integer number in the range $[-10, 10]$.
All instances are nonconvex binary quadratic programming problems.

To compare the performance of two relaxation problems for problem
(BQP), by using problem (SDR1) and problem (SDR2), respectively, we
use performance profiles as described in Dolan and Mor\'{e}'s paper
\cite{dm2002}. Our profiles are based on optimal values for problems
(SDR1) and (SDR2). These problems are solved by \texttt{CVX}, and
the results of performance are shown in Figure \ref{fig_sdr1_sdr2}.
From Figure \ref{fig_sdr1_sdr2}, it is obviously that the lower
bound which got from problem (SDR2) is much greater than that one of
from problem (SDR1), for test problems RdnBQP and RdiBQP,
respectively. Moreover, we find that optimal value of problem (SDR2)
is strictly greater than that of problem (SDR1) for test problems in
the experiment. Thus, the performance of problem (SDR2) is much
better than problem (SDR1) for solving problem (BQP) in some sense.
\begin{figure}[htp]
   \centering
    \includegraphics[width=67mm]{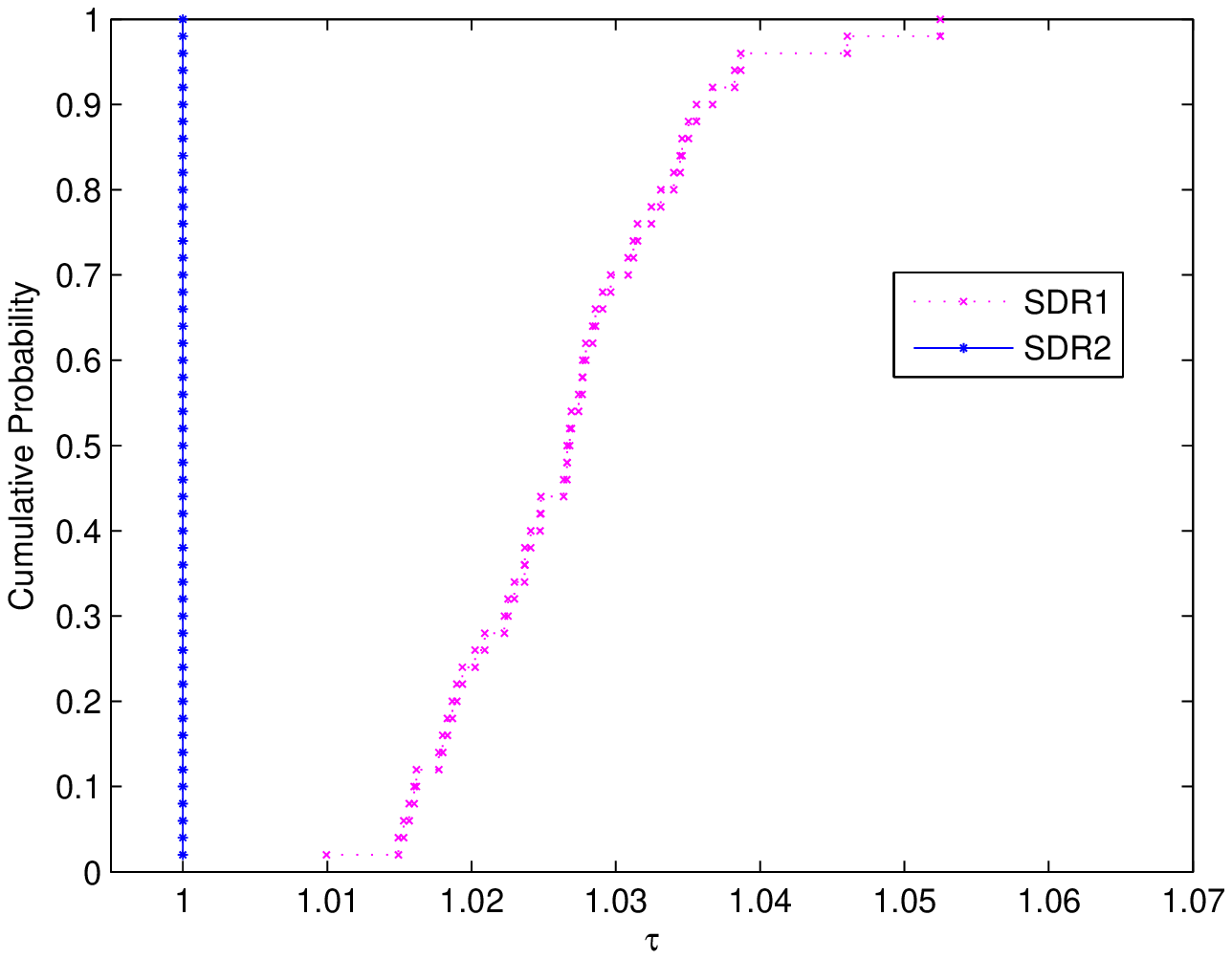}
    \includegraphics[width=67mm]{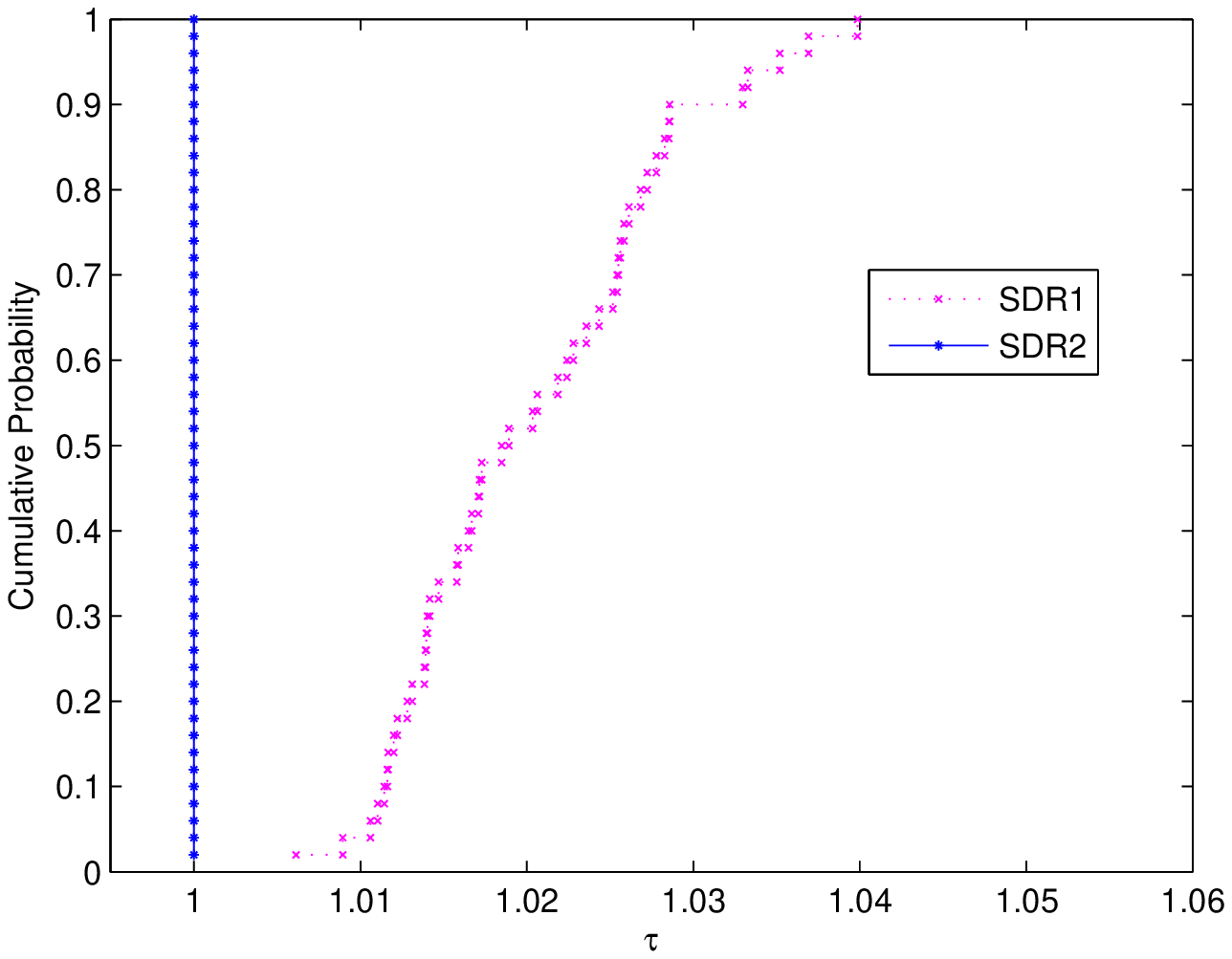}
      \caption{Left figure is based on optimal values of problems RdnBQP,
      right figure is based on optimal values of problems RdiBQP.}
     \label{fig_sdr1_sdr2}
 \end{figure}

%%%%%%%%%%%%%%%%%%%%%%%%%%%%%%%%%%%%%%%%%%%%%%%%%%%%%%%%%%%%%%%%%%%%%%%%%%%%%%%%%%%%%%%%%%%%%%%%%%%%%%%%%%%%%%%%%%%%%%%%%%%%%%%%%%%%%%%%
\subsection{Doubly nonnegative relaxation for problem
(BQP)}\label{sec:Doubly nonnegative relaxation for problem (BQP)}
%%%%%%%%%%%%%%%%%%%%%%%%%%%%%%%%%%%%%%%%%%%%%%%%%%%%%%%%%%%%%%%%%%%%%%%%%%%%%
Recently, Burer \cite{b2009} has shown that a large class of NP-hard
nonconvex quadratic problems with binary constraints can be modeled
as so-called completely positive programs (CPP), i.e., the
minimization of a linear function over the convex cone of completely
positive matrices subject to linear constraints. Motivated by the
ideas, we first establish the CPP representation for problem (BQP),
and then give its doubly nonnegative relaxation (DNNR) formulation.
Subsequently, some compared numerical results are presented in this
section.

Let $z=\frac{1}{2}(e-x)$ in problem (BQP), it follows that
$z\in\{0,1\}^n$, and then problem (BQP) can be equivalently
reformulated as follows:
$$\widehat{\rm{(BQP)}}\begin{array}{lll}
&\min &4z^TQz-4z^T(Qe+c)+e^TQe+2c^Te\\
&{\rm s.t.} &2a_i^Tz=a_i^Te-b_i,\ \forall i\in I,\\
&&z\in\{0,\ 1\}^n,
 \end{array}
$$
where $e$ denote the vector of ones with appropriate dimension.
According to Theorem 2.6 in \cite{b2009} and similar to the analysis
in it, problem $\widehat{\rm{(BQP)}}$ can be further equivalently
transformed into the following CPP problem
$${\rm{(CPP)}}\begin{array}{lll}
&\min &4Q\bullet Z-4z^T(Qe+c)+e^TQe+2c^Te\\
&{\rm s.t.} &2a_i^Tz=a_i^Te-b_i,\ \forall i\in I,\\
&&4a_i^TZa_i=(a_i^Te-b_i)^2,\ \forall i\in I,\\
&&Z_{ii}=z_i,\ \forall i=1,2,\ldots,n,\\
&&\left[\begin{array}{ll}1 & z^T\\ z & Z\end{array}\right]\in
C_{1+n},
 \end{array}
$$
 where $C_{1+n}$ is defined as follows:
$$C_{1+n}:=\left\{X\in S^{1+n}:\ X=\sum\limits_{k\in
K}z^k(z^k)^T\right\}\cup\{0\},$$ and for some finite $\{z^k\}_{k\in
K}\subset R^{1+n}_+\backslash\{0\}$.

In view of the definition of convex cone in \cite{bv2004}, $C_{1+n}$
is a closed convex cone, and is called the completely positive
matrices cone. Thus, problem (CPP) is a convex problem. However,
problem (CPP) is NP-hard, since checking whether or not a given
matrix belongs to $C_{1+n}$ is NP-hard, which has been shown by
Dickinson and Gijen in \cite{dg201}. Thus, it has to be replaced by
some computable cones, which can efficiently approximate cone
$C_{1+n}$. Note that the convex cone $(S_n)^+$ is self-dual, and so
is the convex cone $S_n^+$, where $(S^n)^+$ and $S^n_+$ denotes the
cone of $n\times n$ nonnegative symmetric matrices and the cone of
$n\times n$ positive semidefinite matrices, respectively. Hence,
Diananda's decomposition theorem \cite{d1962} can be reformulated as
follows.
\begin{theorem}\label{the: Diananda's decomposition theorem}$C_{n}\subseteq S_{n}^+\cap
(S_{n})^+$ holds for all $n$. If $n\leq 4$, then $C_{n}=S_{n}^+\cap
(S_{n})^+$.
\end{theorem}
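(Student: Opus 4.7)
The first inclusion $C_n \subseteq S_n^+ \cap (S_n)^+$ should open the proof and follows directly from the definition. Given $X = \sum_{k \in K} z^k (z^k)^T \in C_n$ with $z^k \in R^n_+$, each summand is a rank-one positive semidefinite matrix, so $X \succeq 0$, and entrywise $X_{ij} = \sum_{k \in K} z^k_i z^k_j \geq 0$ since every factor is nonnegative. Both containments follow at once.

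For the reverse inclusion when $n \leq 4$, my plan is to pass to the dual cones. With respect to the trace inner product, $C_n^\ast = \mathrm{COP}_n := \{M \in S^n : y^T M y \geq 0 \text{ for all } y \geq 0\}$ is the copositive cone, while $(S_n^+ \cap (S_n)^+)^\ast = S_n^+ + (S_n)^+$, the sum being closed because $(S_n)^+$ is polyhedral. Applying the bipolar theorem to the closed convex cone $C_n$, the target equality $C_n = S_n^+ \cap (S_n)^+$ is equivalent to $\mathrm{COP}_n = S_n^+ + (S_n)^+$. The task therefore reduces to showing that every copositive matrix of order at most $4$ splits as the sum of a positive semidefinite matrix and an entrywise nonnegative symmetric matrix.

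To establish $\mathrm{COP}_n \subseteq S_n^+ + (S_n)^+$ for $n \leq 4$, I would proceed by induction on $n$, with a case analysis on the sign pattern of $M$. The base cases $n = 1, 2$ are elementary: a copositive $2 \times 2$ matrix either already has a nonnegative off-diagonal entry (and then lies in $(S_n)^+$), or the negative off-diagonal entry is dominated by the geometric mean of the two diagonal entries (and then the whole matrix lies in $S_n^+$). For $n = 3, 4$, the strategy is to identify a rank-one PSD correction $\alpha v v^T$ with $v \geq 0$ so that $M - \alpha v v^T$ either becomes entrywise nonnegative or, after zeroing a row and column, reduces to a lower-order copositive matrix that the inductive hypothesis handles; here Theorem~\ref{theo: Schur complement} provides a convenient tool for tracking positive semidefiniteness across the reduction.

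The main obstacle will be the $n = 4$ case: the argument must succeed here but has to fail at $n = 5$, where the Horn matrix is the classical obstruction to $\mathrm{COP}_n = S_n^+ + (S_n)^+$. This tells me the proof cannot rest on purely combinatorial sign-pattern considerations and must exploit a genuinely low-dimensional structural feature that breaks in dimension five. Since the statement is classical and due to Diananda \cite{d1962}, I would ultimately finish by appealing to his original elementary argument rather than reproducing the full case breakdown.
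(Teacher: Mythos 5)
Your route is essentially the one the paper itself gestures at: the paper gives no proof of this theorem at all, but prefaces it by noting the self-duality of $S_n^+$ and of $(S_n)^+$ and then states the result as a ``reformulation'' of Diananda's decomposition theorem, citing \cite{d1962}. Your proof of the easy inclusion $C_n\subseteq S_n^+\cap (S_n)^+$ is complete and correct, and your dualization --- $C_n^{\ast}=\mathrm{COP}_n$, so by the bipolar theorem (using that $C_n$ is closed) the equality for $n\leq 4$ is equivalent to Diananda's statement $\mathrm{COP}_n=S_n^+ + (S_n)^+$ --- is exactly the implicit content of that reformulation. Deferring the hard core ($n=3,4$) to Diananda's original elementary argument is therefore consistent with the paper's own treatment; your sketched induction with rank-one PSD corrections is only a plan, not a proof, but you say so explicitly, and your remark that any proof must break at $n=5$ because of the Horn matrix is the right sanity check.

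One step as written is wrong, though repairable: you justify the closedness of $S_n^+ + (S_n)^+$ by the polyhedrality of $(S_n)^+$, but the sum of a closed convex cone and a polyhedral cone need not be closed (take the second-order cone $\{(x,y,z):z\geq\sqrt{x^2+y^2}\}$ plus the ray through $(1,0,-1)$: the points $(0,y,\epsilon)$ lie in the sum for every $\epsilon>0$ while $(0,y,0)$ with $y\neq 0$ does not). The fact you need is nevertheless true. Either argue directly: if $P_k+N_k\to X$ with $P_k\succeq 0$ and $N_k\geq 0$ entrywise, then $\mathrm{tr}(P_k)\leq\mathrm{tr}(P_k+N_k)$ is bounded and $\|P_k\|_F\leq\mathrm{tr}(P_k)$ for PSD matrices, so a subsequence of $(P_k)$ converges and the limit decomposition follows; or invoke the criterion that $K_1+K_2$ is closed when $K_1\cap(-K_2)=\{0\}$, which holds here because a PSD matrix with nonpositive entries has zero diagonal and is therefore zero. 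Better still, note that the containment you actually need, $S_n^+\cap (S_n)^+\subseteq C_n$, does not require closedness of the sum at all: given $X$ doubly nonnegative and any copositive $M=P+N$ with $P\succeq 0$, $N\geq 0$, one has $\langle X,M\rangle=\langle X,P\rangle+\langle X,N\rangle\geq 0$, whence $X\in\mathrm{COP}_n^{\ast}=C_n$ by the bipolar theorem and the closedness of $C_n$ alone.
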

By the way, the matrices in $S_n^+\cap (S_n)^+$ sometimes are called
``doubly nonnegative". Of course, in dimension $n\geq 5$ there are
matrices which are doubly nonnegative but not completely positive,
the counterexample can be seen in \cite{bs2003}.

According to Theorem \ref{the: Diananda's decomposition theorem},
problem (CPP) can be relaxed to the following DNNP problem
$${\rm{(DNNP)}}\begin{array}{lll}
&\min &4Q\bullet Z-4z^T(Qe+c)+e^TQe+2c^Te\\
&{\rm s.t.} &2a_i^Tz=a_i^Te-b_i,\ \forall i\in I,\\
&&4a_i^TZa_i=(a_i^Te-b_i)^2,\ \forall i\in I,\\
&&Z_{ii}=z_i,\ \forall i=1,2,\ldots,n,\\
&&\left[\begin{array}{ll}1 & z^T\\ z & Z\end{array}\right]\in
S_{1+n}^+\cap(S_{1+n})^+.
 \end{array}
$$

Up to now, the other convex relaxation problem for problem (BQP) is
established, i.e., problem (DNNP), which is computable by some
popular packages for solving convex programs, such as \texttt{CVX},
etc.

Note that problem (DNNP) has $n+\frac{n(n+1)}{2}$ equality
constraints more than standard semidefinite relaxation problem
(SDR1), and $n$ equality constraints more than problem (SDR2),
respectively. Thus, the lower bound which get from problem (DNNP) is
much greater than that one of by problem (SDR1) and problem (SDR2),
respectively.

In the following, two types of problems are tested to show the
performance of problem (SDR1), problem (SDR2) and problem (DNNP),
respectively. The statistics of the test problems are chosen as
follows:
\begin{table}[htbp] \centering \footnotesize\caption{Statistics of the test problems}
\begin{tabular}{ lllllllllllllllllllllllllll}
 \hline\hline
  Type   &  &Instances  &  &$n$  & &$m$  & &Function \\
 \hline
 RdBQP  &  &$50$       &  &$50$ & &$25$ & &\texttt{rand($\cdot$)}\\
 RdsBQP  &  &$50$       &  &$50$ & &$25$ & &\texttt{rands($\cdot$)}\\
 \hline\hline
\end{tabular}
\label{tab: Statistics for sdr1_sdr2_dnnp}
\end{table}

$\bullet$\ \ {\footnotesize\textbf{RdBQP}}. For this type of
problems, we generate $50$ instances of problem (BQP) by using
MATLAB function \texttt{rand($\cdot$)}. The symmetric matrix $Q$ is
generated by \texttt{rand($\cdot$)+rand($\cdot$)'}, and all problems
are nonconvex.

$\bullet$\ \ {\footnotesize\textbf{RdsBQP}}. The coefficients of
$50$ instances of problem (BQP) are generated by using MATLAB
function \texttt{rands($\cdot$)}, and the symmetric matrix $Q$ is
generated by \texttt{rands($\cdot$)+rands($\cdot$)'}. All instances
are nonconvex.

We use performance profiles \cite{dm2002} to compare the performance
of problem (SDR1), problem (SDR2) and problem (DNNP), for problem
(BQP), respectively. The corresponding results of performance are
shown in Figure \ref{fig_sdr1_sdr2_dnnp_optval}. The profiles for
Figure \ref{fig_sdr1_sdr2_dnnp_optval} are based on optimal values
of problem (SDR1), problem (SDR2) and problem (DNNP), respectively,
and these problems are solved by \texttt{CVX}. From Figure
\ref{fig_sdr1_sdr2_dnnp_optval}, it is obviously that the
performances of problem (SDR2) and problem (DNNP) are almost the
same, which are better than that one of problem (SDR1), for problems
RdBQP and RdsBQP, respectively. Thus, we can conclude that it is
more efficient to use problem (SDR2) and problem (DNNP) than problem
(SDR1) to solve problem (BQP), from the point of view of optimal
values.

Furthermore, we will show the equivalence of the problems (DNNP) and
(SDR2) in Section \ref{sec: Relationship between relaxation
problems}.

\begin{figure}[htp]
   \centering
    \includegraphics[width=67mm]{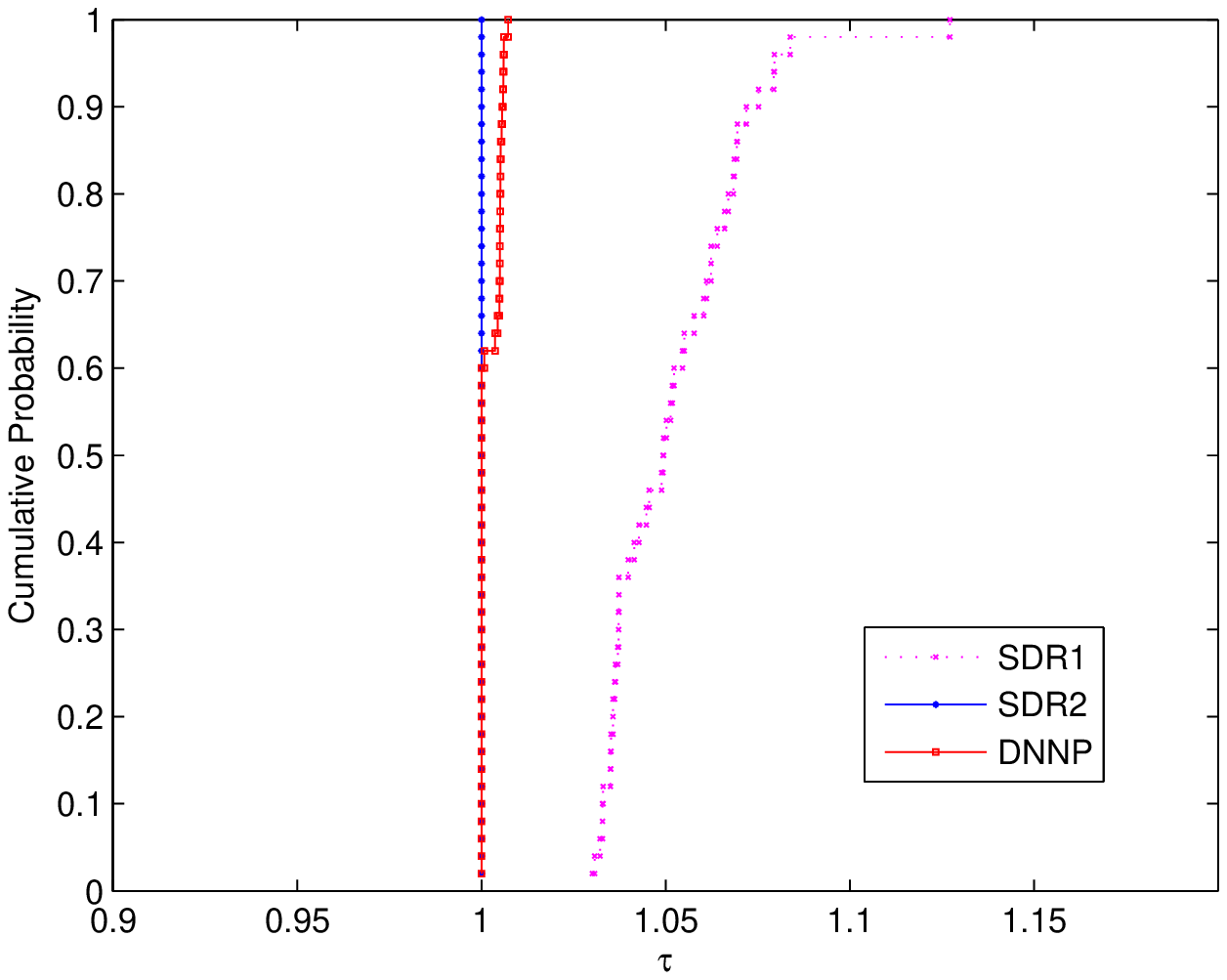}
    \includegraphics[width=67mm]{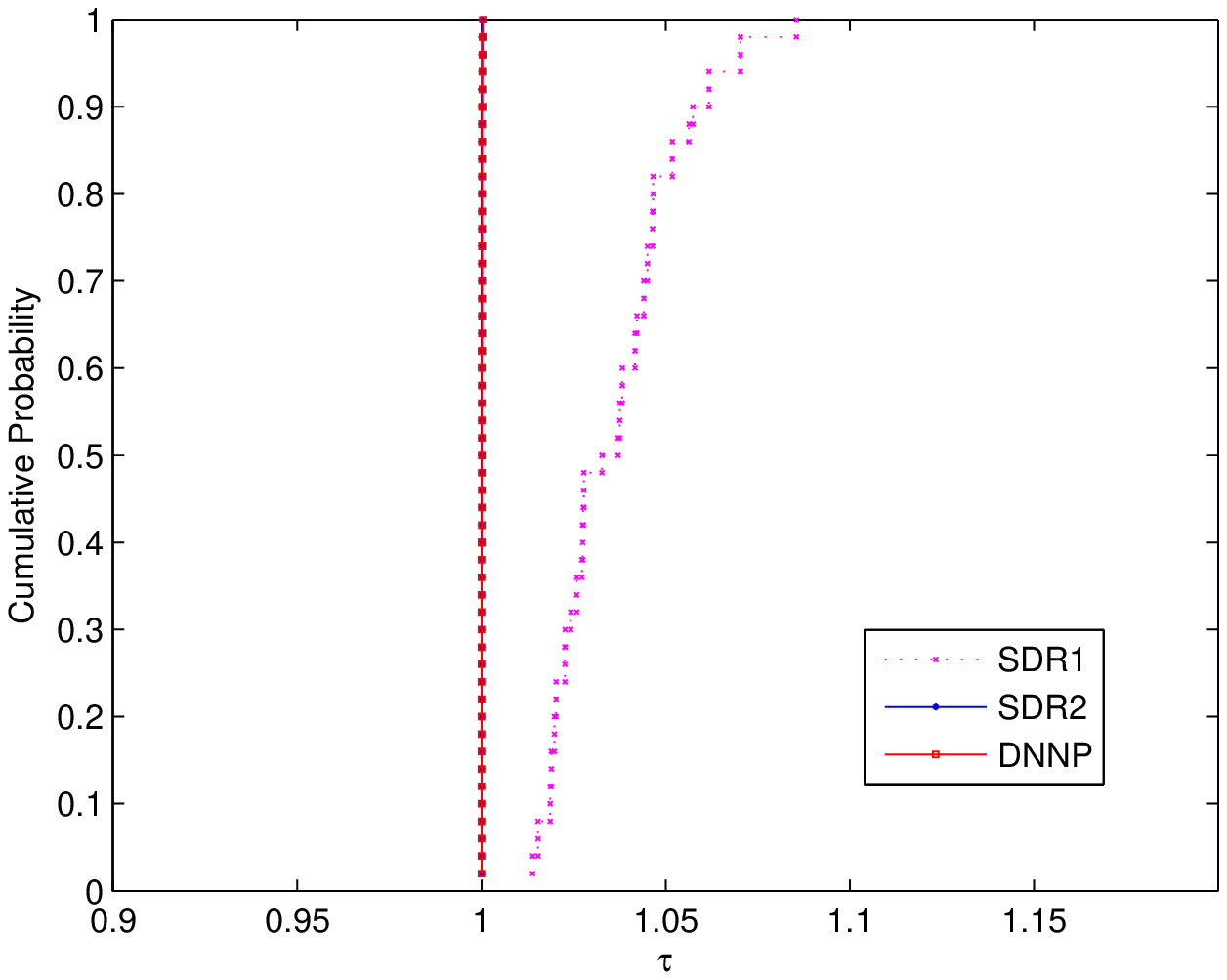}
      \caption{Left figure is based on optimal values of problems RdBQP,
      right figure is based on optimal values of problems RdsBQP.}
     \label{fig_sdr1_sdr2_dnnp_optval}
 \end{figure}

%%%%%%%%%%%%%%%%%%%%%%%%%%%%%%%%%%%%%%%%%%%%%%%%%%%%%%%%%%%%%%%%%%%%%%%%%%%%%%%%%%%%%%%%%%%%%%%%%%%%%%%%%%%%%%%%%%%%%%%%%%%%%%%%%%%
\section{Relationship between relaxation problems}\label{sec: Relationship between relaxation problems}
In this section, we will investigate the relationship between two
relaxation problems (SDR2) and (DNNP). First of all, the definition
of the equivalence of two optimization problems is defined as
follows.

\begin{definition}\label{def: equivalence} We call two problems are
equivalent if they satisfy the following two conditions:

(i)  If from a solution of one problem, a solution of the other
problem is readily found, and vice versa.

(ii) The two problems have the same optimal value.
\end{definition}

Now, based on the above Definition \ref{def: equivalence}, the main
theorem is given below.

\begin{theorem}\label{theo: equivalence problem (SDR2) and problem (DNNP)}
Suppose that the feasible sets $\rm{Feas(SDR2)}$ and
$\rm{Feas(DNNP)}$ are all nonempty. Then, two problems (SDR2) and
(DNNP) are equivalent.
\end{theorem}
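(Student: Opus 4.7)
The plan is to establish an explicit invertible affine change of variables between the feasible sets of problems (SDR2) and (DNNP) that preserves the objective value, and then invoke Definition~\ref{def: equivalence}. The change of variables is motivated by the substitution $z=\frac{1}{2}(e-x)$ already used to pass from (BQP) to $\widehat{\rm{(BQP)}}$; at the level of the lifted matrices I would set
$$
\Phi:(x,X)\longmapsto(z,Z),\qquad z=\frac{1}{2}(e-x),\quad Z=\frac{1}{4}\bigl(ee^T-ex^T-xe^T+X\bigr),
$$
whose inverse is $x=e-2z$, $X=ee^T-2ez^T-2ze^T+4Z$. Both $\Phi$ and $\Phi^{-1}$ are linear bijections of the ambient spaces, so proving equivalence reduces to checking that feasibility is preserved in both directions and that the objectives agree pointwise under $\Phi$; the correspondence of optimal solutions and optimal values then follows directly from Definition~\ref{def: equivalence}.

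For the constraint-by-constraint check, the linear equalities $a_i^Tx=b_i$ and $2a_i^Tz=a_i^Te-b_i$ are obviously equivalent. Expanding $a_i^TXa_i$ with $X=ee^T-2ez^T-2ze^T+4Z$ and substituting the first identity gives $a_i^TXa_i=(a_i^Te)^2-4(a_i^Te)(a_i^Tz)+4a_i^TZa_i$, which equals $b_i^2$ iff $4a_i^TZa_i=(a_i^Te-b_i)^2$. The diagonal conditions $X_{ii}=1$ and $Z_{ii}=z_i$ transform into each other via $Z_{ii}=\frac{1}{4}(1-2x_i+X_{ii})$. Crucially, the cut-type inequalities $1-x_i-x_j+X_{ij}\ge0$ in (SDR2) are exactly $4Z_{ij}\ge0$, which supplies the nonnegative-matrix part $(S_{1+n})^+$ of the DNN cone (once the boundary entries $z_i\ge0$ are controlled, see below).

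The main technical step, and the only one that is not purely algebraic rewriting, is matching the two conic constraints
$$
M_x:=\left[\begin{array}{cc}1 & x^T\\ x & X\end{array}\right]\succeq0,\qquad M_z:=\left[\begin{array}{cc}1 & z^T\\ z & Z\end{array}\right]\in S_{1+n}^+\cap(S_{1+n})^+.
$$
I would introduce the nonsingular block matrix
$$
T=\left[\begin{array}{cc}1 & 0\\ \frac{1}{2}\,e & -\frac{1}{2}\,I\end{array}\right]
$$
and verify by direct block multiplication that $M_z=TM_xT^T$. Since $T$ is invertible, this is a congruence, so $M_x\succeq0\Longleftrightarrow M_z\succeq0$. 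The entrywise nonnegativity of $M_z$ then splits into (i) $z_i\ge0$, which follows from the $2\times2$ principal minor of $M_x$ indexed by $\{0,i\}$ combined with $X_{ii}=1$ (forcing $|x_i|\le1$, hence $z_i\in[0,1]$), and (ii) $Z_{ij}\ge0$, which is precisely the cut inequality already in (SDR2). Thus $\Phi$ restricts to a bijection between $\rm{Feas(SDR2)}$ and $\rm{Feas(DNNP)}$.

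Finally, substituting $X=ee^T-2ez^T-2ze^T+4Z$ and $x=e-2z$ into $X\bullet Q+2c^Tx$ and using symmetry of $Q$ reproduces exactly $4Q\bullet Z-4z^T(Qe+c)+e^TQe+2c^Te$, so the two objective functions coincide under $\Phi$. Hence optimal solutions correspond one to one and the two optimal values are equal, which is the equivalence required by Definition~\ref{def: equivalence}. The only nonroutine ingredient is spotting the congruence $T$; once it is in place, every remaining verification is direct algebra.
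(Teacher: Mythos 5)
Your proposal is correct, and its skeleton coincides with the paper's: the same affine substitution $z=\frac{1}{2}(e-x)$, $Z=\frac{1}{4}(ee^T-ex^T-xe^T+X)$ with the same inverse, the same constraint-by-constraint matching (including deriving $4a_i^TZa_i=(a_i^Te-b_i)^2$ with the help of the linear constraint, and reading the cut inequalities as $4Z_{ij}\ge 0$), and the same objective identity. The one place you genuinely diverge is the conic step: the paper invokes the Schur complement theorem (its Theorem 2.1(ii)) twice, computing $Z-zz^T=\frac{1}{4}(X-xx^T)$ in one direction and $X-xx^T=4(Z-zz^T)$ in the other, whereas you exhibit the explicit congruence $M_z=TM_xT^T$ with $T=\left[\begin{array}{cc}1 & 0\\ \frac{1}{2}e & -\frac{1}{2}I\end{array}\right]$, which I have checked by block multiplication and which handles both implications at once since $T$ is invertible. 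Your congruence framing buys two small things: it packages the entire proof as a single feasibility-preserving, objective-preserving bijection rather than two separate one-sided optimal-value comparisons, and in particular it establishes $\rm{Opt(SDR2)}=\rm{Opt(DNNP)}$ as an equality of infima without presupposing that optimal solutions are attained --- a point the paper's phrasing (``suppose $(x^*,X^*)$ is an optimal solution'') quietly glosses over, since nonemptiness of the feasible sets alone does not guarantee attainment. A second minor variation: you obtain $z_i\ge 0$ from the $2\times 2$ principal minor of $M_x$ together with $X_{ii}=1$ (forcing $|x_i|\le 1$), while the paper gets it more directly from the diagonal cut inequality $1-2x_i+X_{ii}\ge 0$ combined with $Z_{ii}=z_i$; both are valid, and the paper's route shows the PSD constraint is not even needed for this entry.
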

\begin{proof}
The proof can be divided into two parts. First of all, we will prove
that $\rm{Opt(SDR2)\geq Opt(DNNP)}$.

Suppose that $(x^*,X^*)$ is an optimal solution of problem
$\rm(SDR2)$, let $Z_{ij}=\frac{1}{4}(1-x_i^*-x_j^*+X_{ij}^*)$ and
$z_i=\frac{1}{2}(1-x_i^*),\ \forall 1\leq i\leq j\leq n$, i.e.,
\begin{equation}\label{e: 2}Z=\frac{1}{4}(ee^T-e(x^*)^T-x^*e^T+X^*),\
z=\frac{1}{2}(e-x^*).\end{equation}

By $a_i^Tx^*=b_i$ for all $i\in I$ and (\ref{e: 2}), we have
\begin{equation}\label{e: 3}
a_i^Tx^*=a_i^T(e-2z)=b_i\Rightarrow 2a_i^Tz=a_i^Te-b_i,\ \forall
i\in I.
\end{equation}

From (\ref{e: 2}) and $a_i^TX^*a_i=b_i^2$ for all $i\in I$, it
follows that
\begin{equation}\label{e: 4}\begin{array}{ll}
a_i^TX^*a_i&=a_i^T(4Z-ee^T+e(x^*)^T+x^*e^T)a_i=b_i^2\\
&\Rightarrow 4a_i^TZa_i=(a_i^Te-b_i)^2,\ \forall i\in I.
\end{array}
\end{equation}

Again from (\ref{e: 2}), which imply that
\begin{equation}\label{e: 5}Z_{ii}=\frac{1}{4}(1-2x_i^*+X_{ii}^*)=\frac{1}{2}(1-x_i^*)=z_i,\ \forall i\in I,\end{equation}
since $X_{ii}^*=1$.

From $1-x_i^*-x_j^*+X_{ij}^*\geq 0, \forall 1\leq i\leq j\leq n$, it
holds that
\begin{equation}\label{e: 6}Z_{ij}\geq 0, \forall 1\leq
i\leq j\leq n,\end{equation} which combining with (\ref{e: 5}),
further imply that
\begin{equation}\label{e: 7}
z_i\geq 0, \forall 1\leq i\leq n.
\end{equation}

By Theorem \ref{theo: Schur complement}(ii) and (\ref{e: 2}), it
follows that
\begin{equation}\label{e: 8}\begin{array}{ll}
Z-zz^T&=\frac{1}{4}(ee^T-e(x^*)^T-x^*e^T+X^*)-\frac{1}{4}(e-x^*)(e-x^*)^T\\
&=\frac{1}{4}(X^*-x^*(x^*)^T)\succeq 0.
\end{array}
\end{equation}
Combining (\ref{e: 8}) with (\ref{e: 3}), (\ref{e: 4}), (\ref{e: 6})
and (\ref{e: 7}), it follows that $(z,Z)$ defined by (\ref{e: 2}) is
a feasible solution for problem $\rm{(DNP)}$.

Moreover, again from (\ref{e: 2}), we have
$$\begin{array}{ll}
4Q\bullet
Z-4z^T(Qe+c)+e^TQe+2c^Te\\
=Q\bullet(ee^T-e(x^*)^T-x^*e^T+X^*)-2(e-x^*)^T(Qe+c)+e^TQe+2c^Te\\
=Q\bullet X^*+2c^Tx^*=\rm{Opt(SDR2)},
\end{array}
$$
which further imply that $\rm{Opt}(DNNP)\leq\rm{Opt}(SDR2)$.

On the other hand, given an optimal solution $(z^*,Z^*)$ to problem
(DNNP), and let
\begin{equation}\label{e: 9}
X_{ij}=1-2z_i^*-2z_j^*+4Z_{ij}^*,\ x_i=1-2z^*_i,\ \forall 1\leq
i\leq j\leq n,
\end{equation}
which imply that
\begin{equation}\label{e: 10}
X_{ii}=1-4z^*_i+4Z^*_{ii}=1
\end{equation}
since $Z^*_{ii}=z^*_i,\ \forall i=1,2,\ldots,n$. Moreover,
\begin{equation}\label{e: 11}\begin{array}{ll}
1-x_i-x_j+X_{ij}&=1-(1-2z_i^*)-(1-2z_j^*)+1-2z_i^*-2z_j^*+4Z_{ij}^*\\
&=4Z_{ij}^*\geq 0,\ \forall 1\leq i\leq j\leq n.
\end{array}\end{equation}

From (\ref{e: 9}) and $2a_i^Tz^*=a_i^Te-b_i,\ \forall i\in I$, it
follows that
\begin{equation}\label{e: 12}
a_i^Tx=a_i^T(e-2z^*)=b_i, \ \forall i\in I.
\end{equation}

Again from (\ref{e: 9}) and $4a_i^TZ^*a_i=(a_i^Te-b_i)^2,\ \forall
i\in I$, we have
\begin{equation}\label{e: 13}\begin{array}{ll}
a_i^TXa_i&=a_i^T(ee^T-2e(z^*)^T-2z^*e^T+4Z^*)a_i\\
&=b_i^2,\ \forall i\in I.
\end{array}
\end{equation}

From (\ref{e: 9}) and Theorem \ref{theo: Schur complement}(ii), it
holds that
\begin{equation}\label{e: 14}\begin{array}{lll}
X-xx^T&=ee^T-2z^*e^T-2e(z^*)^T+4Z^*-(e-2z^*)(e-2z^*)^T\\
&=4(Z^*-z^*(z^*)^T)\succeq 0.
\end{array}\end{equation}

By (\ref{e: 11}), (\ref{e: 12}), (\ref{e: 13}) and (\ref{e: 14}), we
can conclude that $(x,X)$ defined by (\ref{e: 9}) is a feasible
solution for problem $\rm{(SDR2)}$. Furthermore, we have
$$\begin{array}{ll}X\bullet Q+2c^Tx&=(ee^T-2e(z^*)^T-2z^*e^T+4Z^*)\bullet Q+2c^T(e-2z^*)\\
&=4Z^*\bullet Q-4(z^*)^T(Qe+c)+e^TQe+2c^Te\\&=\rm{Opt(DNNP)},
\end{array}$$
which imply that $\rm{Opt}(SDR2)\leq\rm{Opt}(DNNP)$. Summarizing the
analysis above and according to Definition \ref{def: equivalence},
we can conclude that problem (DNNP) is equivalent to problem (SDR2).
\end{proof}

Although $\rm{Opt}(SDR2)=\rm{Opt}(DNNP)$ in view of Theorem
\ref{theo: equivalence problem (SDR2) and problem (DNNP)} and
Definition \ref{def: equivalence}, problem (DNNP) has $n$ equality
constraints more than problem (SDR2) in form. So, the amount of
computation for solving problem (DNNP) may be much greater than that
one of solving problem (SDR2). In order to illustrate this point of
view, the compared performance results are shown in Figure
\ref{fig_sdr1_sdr2_dnnp_itenumb} and Figure
\ref{fig_sdr1_sdr2_dnnp_cpu}, respectively, which are based on the
number of iterations and CPU time for solving problems RdBQP and
RdsBQP. The results in Figure \ref{fig_sdr1_sdr2_dnnp_itenumb} show
that the performance of problem (SDR2) is better than that one of
problem (DNNP) for problems RdBQP, but the performance of problem
(DNNP) is better than that one of problem (SDR2) for problems
RdsBQP, in view of the points of the number of iterations. From the
results of the performance of CPU time, it is obviously that problem
(SDR2) is more efficient than problem (DNNP) for solving problems
RdBQP and RdsBQP, respectively. Summarizing the analysis above, we
can efficiently solving problem (BQP) by soling problem (SDR2) or
problem (DNNP) in practice.
\begin{figure}[htp]
   \centering
    \includegraphics[width=67mm]{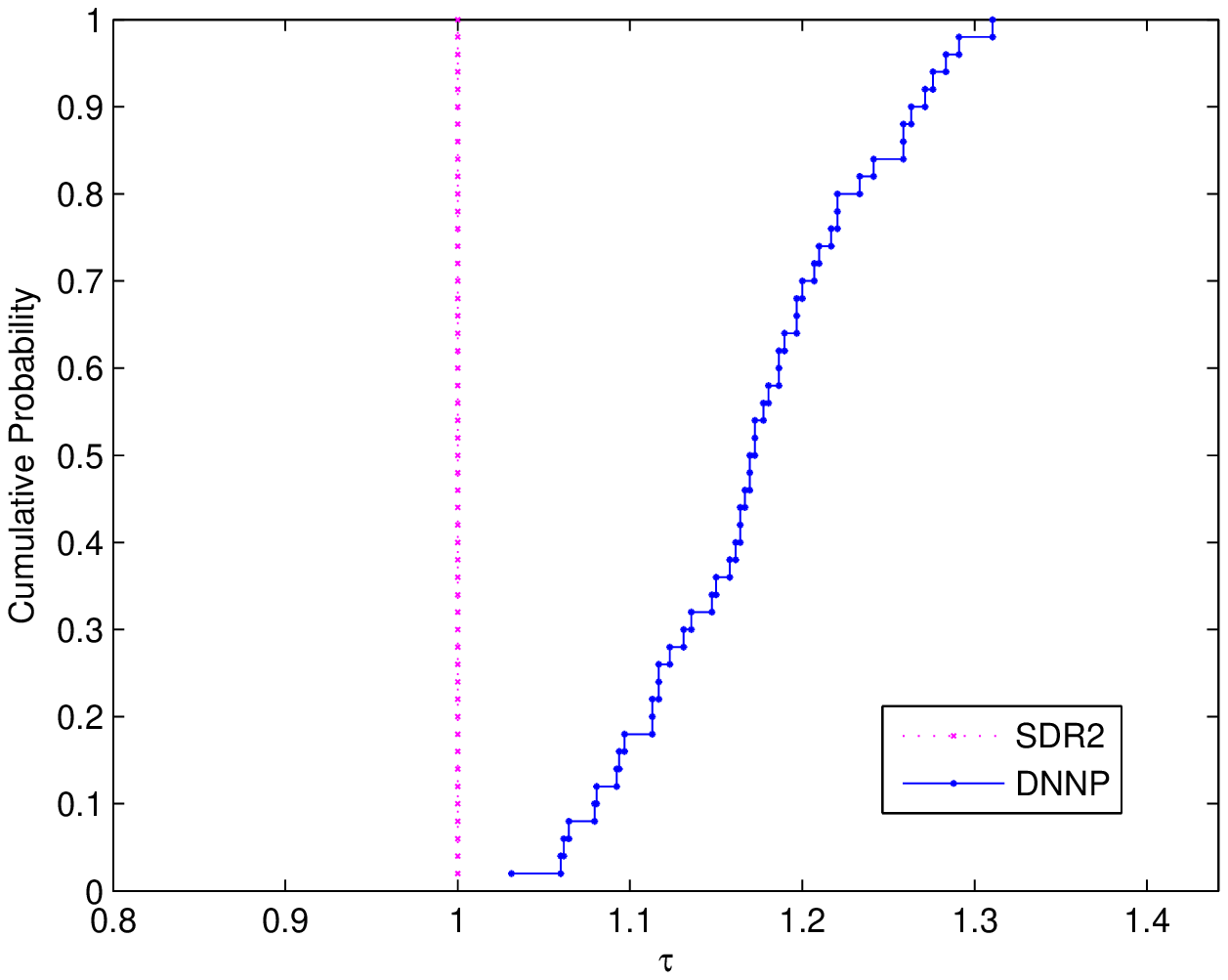}
    \includegraphics[width=67mm]{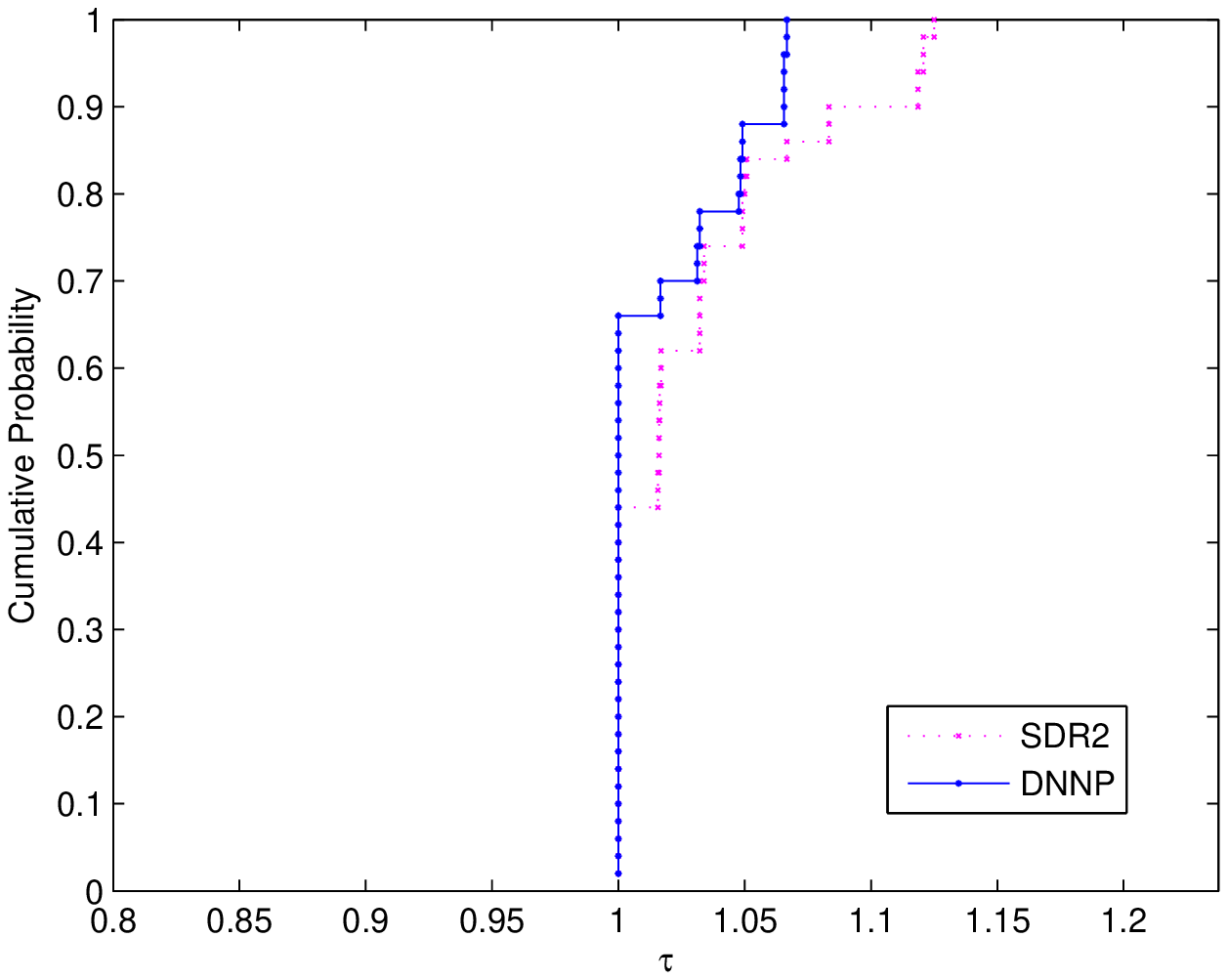}
      \caption{Left figure is based on the number of iterations of problems RdBQP,
      right figure is based on the number of iterations of problems RdsBQP.}
     \label{fig_sdr1_sdr2_dnnp_itenumb}
 \end{figure}
\begin{figure}[htp]
   \centering
    \includegraphics[width=67mm]{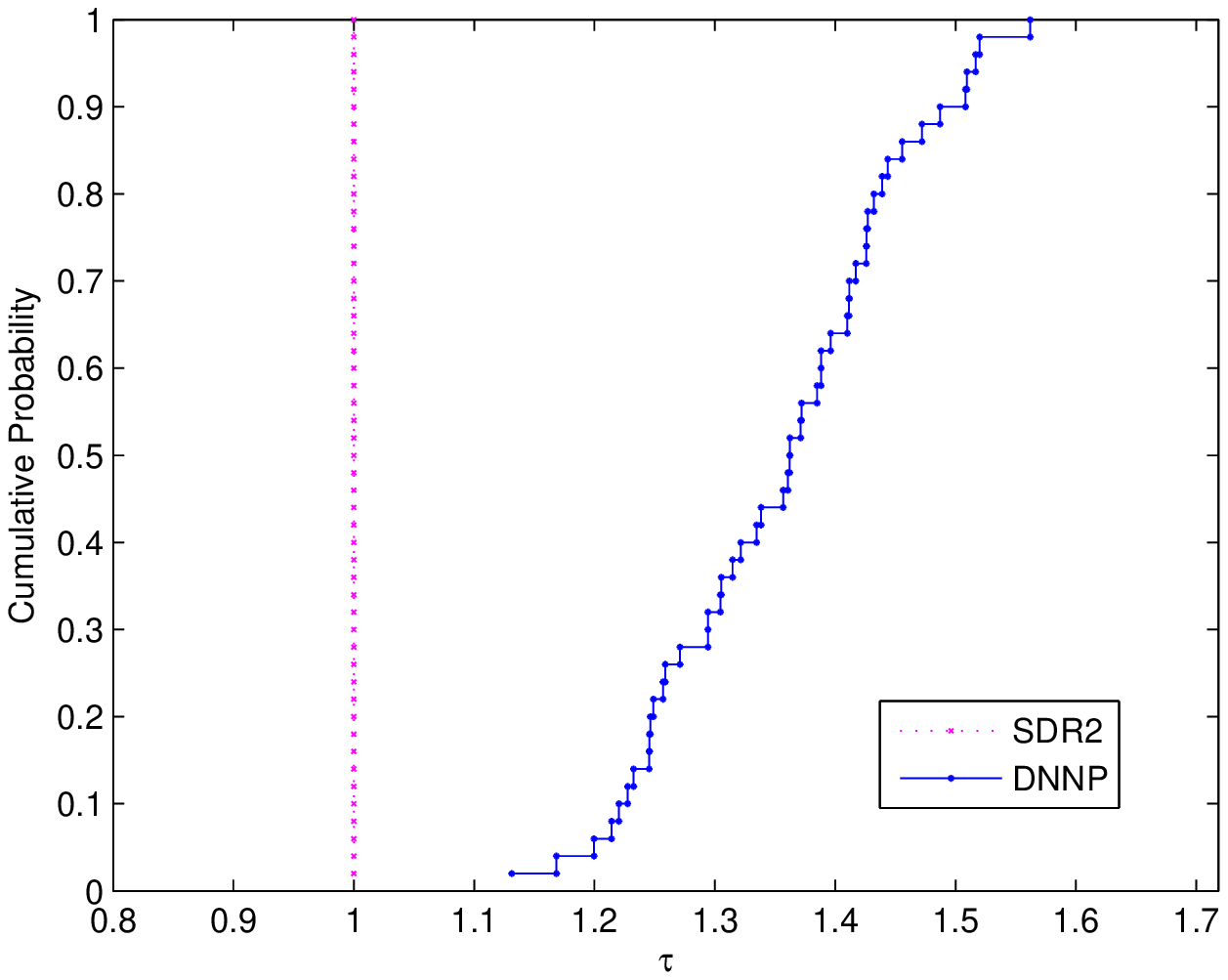}
    \includegraphics[width=67mm]{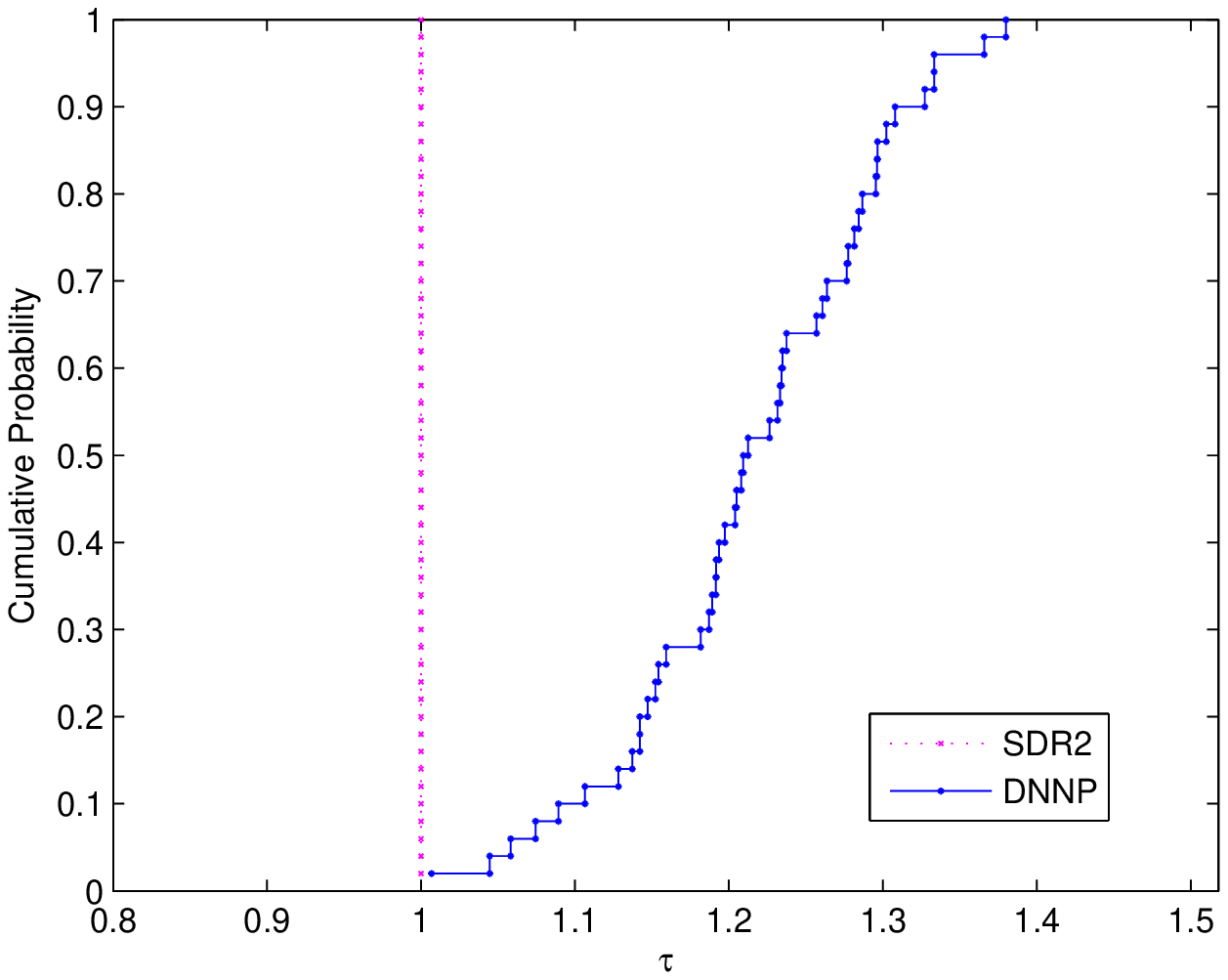}
      \caption{Left figure is based on CPU time of problems RdBQP,
      right figure is based on CPU time of problems RdsBQP.}
     \label{fig_sdr1_sdr2_dnnp_cpu}
 \end{figure}

%%%%%%%%%%%%%%%%%%%%%%%%%%%%%%%%%%%%%%%%%%%%%%%%%%%%%%%%%%%%%%%%%%%%%%%%%%%%%%%%%%%%%%%%%%%%%%%%%%%%%%%%%%%%%%%%%%%%%%%%%%%%%%%%%%%%%%%%%%%%%%%%%%%%%%%%%%%
\section{An application to max-cut problem}\label{sec: application to max-cut
problem}
%%%%%%%%%%%%%%%%%%%%%%%%%%%%%%%%%%%%%%%%%%%%%%%%%%%%%%%%%%%%%%%%%%%%%%%%%%%%%%%%
The max-cut (MC) problem is a kind of important combinatorial
optimization problem on undirected graphs with weights on the edges,
and also is NP-hard \cite{k2010}. Given such a graph, (MC) problem
consists in finding a partition of the set of nodes into two parts
so as to maximize the total weight of edges cut by the partition.

Let $G$ be an $n$-node graph, vertex set $V:=\{1,2,\ldots,n\}$,
$A(G)$ the adjacency matrix of graph $G$, $L$ the Laplacian matrix
associated with the graph, i.e., $L:={\rm{Diag}}(A(G)e)-A(G)$. Let
the vector $u\in\{+1,-1\}^n$ represent any cut in the graph $G$ via
the interpretation that the sets $\{i:u_i=+1\}$ and $\{i:u_i=-1\}$
form a partition of the node set of $G$, we can get the following
formulation for (MC) problem
$$\rm{(MC)}\label{mc}\begin{array}{lll}
&\max &\frac{1}{4}u^TLu\\
&{\rm s.t.} &u\in\{+1,-1\}^n.
 \end{array}
$$

On one hand, by using the standard semidefinite relaxation technique
to (MC) problem, we can get the following problem
$$\rm{\widehat{(SDR)}}\begin{array}{lll}
&\max &\frac{1}{4}L\bullet U\\
&{\rm s.t.} &U_{ii}=1,\ \forall i=1,2,\ldots,n,\\
&&U\succeq 0.
 \end{array}
$$
Goemans and Williamson \cite{gw1995} have provided estimates for the
quality of problem $(\rm\widehat{SDR})$ bound for (MC) problem. By a
randomly rounding a solution to problem $(\rm\widehat{SDR})$, they
propose a $0.878$-approximation algorithm for solving problem (MC)
based on problem $(\rm\widehat{SDR})$, which is known to be the best
approximation ration of polynomial-time algorithm for solving
problem (MC).

On the other hand, according to the technique introduced in Section
\ref{sec:Doubly nonnegative relaxation for problem (BQP)}, problem
(MC) can also be relaxed to the following doubly nonnegative
programming problem
$$\rm{\widehat{(DNNP)}}\begin{array}{lll}
&\max &L\bullet X-x^TLe+\frac{1}{4}e^TLe\\
&{\rm s.t.} &X_{ii}=x_i,\ \forall i=1,2,\ldots,n,\\
&&\left[\begin{array}{ccc}1 & x^T
\\ x & X\end{array}\right]\in S_{1+n}^+\cap(S_{1+n})^+.
 \end{array}
$$

\begin{remark}\label{re: feasible solution}
(i) Note that for two relaxation problems $\rm{\widehat{(SDR)}}$ and
$\rm{\widehat{(DNNP)}}$, the feasible sets are all nonempty. It is
obviously that the identity matrix $E$ is a feasible solution for
problem $\rm{\widehat{(SDR)}}$, and $(x,X)=(0,\mathbf{0})$ feasible
for problem $\rm{\widehat{(DNNP)}}$.

(ii) Compared with problem $\rm{\widehat{(SDR)}}$, problem
$\rm{\widehat{(DNNP)}}$ has not only $n+\frac{n(n+1)}{2}$ new
inequality constraints, but also $n$ variables.
\end{remark}

Thus, according to Theorem \ref{theo: equivalence problem (SDR2) and
problem (DNNP)} and Remark \ref{re: feasible solution}(i), we have
the following theorem.
\begin{theorem}\label{theo: equivalence problem (SDP) and problem (DNNP)}
Problem $\rm\widehat{(SDR)}$ is equivalent to problem
$\rm\widehat{(DNNP)}$.
\end{theorem}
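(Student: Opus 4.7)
The plan is to reduce the claim to Theorem \ref{theo: equivalence problem (SDR2) and problem (DNNP)} by recognizing (MC) as a particular instance of (BQP) and then matching the relaxations. Max-cut corresponds to taking $Q=-\frac{1}{4}L$, $c=0$, and $m=0$ (no linear equality constraints). Substituting $z=\frac{1}{2}(e-u)$ as in Section \ref{sec:Doubly nonnegative relaxation for problem (BQP)}, a direct computation shows that the (DNNP) objective $4Q\bullet Z-4z^T(Qe+c)+e^TQe+2c^Te$ for this instance equals $-L\bullet Z+z^TLe-\frac{1}{4}e^TLe$, whose negation (switching from min to max) is exactly the objective of $\widehat{(DNNP)}$. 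Hence (DNNP) applied to this instance is $\widehat{(DNNP)}$.

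The main work is to show that for this instance, (SDR2) is equivalent to $\widehat{(SDR)}$ in the sense of Definition \ref{def: equivalence}. With $m=0$, (SDR2) becomes: maximize $\frac{1}{4}L\bullet X$ subject to $X_{ii}=1$, $1-x_i-x_j+X_{ij}\geq 0$ for all $1\leq i\leq j\leq n$, and $\left[\begin{array}{ll}1 & x^T\\ x & X\end{array}\right]\succeq 0$. In one direction, given $(x,X)$ feasible for (SDR2), the Schur-complement implication (\ref{e: conic equivalence}) gives $X\succeq 0$, so $X$ alone is feasible for $\widehat{(SDR)}$ with the same objective value. In the other direction, given $U$ feasible for $\widehat{(SDR)}$, set $(x,X)=(0,U)$: the diagonal condition $X_{ii}=1$ and the block constraint $\left[\begin{array}{ll}1 & 0\\ 0 & U\end{array}\right]\succeq 0$ are immediate, and $1+U_{ij}\geq 0$ follows because the $2\times 2$ principal submatrix $\left[\begin{array}{ll}1 & U_{ij}\\ U_{ij} & 1\end{array}\right]$ of $U$ is PSD, forcing $|U_{ij}|\leq 1$. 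Hence the optima of (SDR2) and $\widehat{(SDR)}$ agree, and solutions transfer readily between them.

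By Remark \ref{re: feasible solution}(i) the feasible sets of $\widehat{(SDR)}$ and $\widehat{(DNNP)}$ are nonempty, and the identifications of the previous paragraph transfer nonemptiness to (SDR2) and (DNNP) for this instance (for example $x=0$ with $X$ equal to the identity is feasible for (SDR2), and the zero pair is feasible for (DNNP)). Applying Theorem \ref{theo: equivalence problem (SDR2) and problem (DNNP)} yields the equivalence of (SDR2) and (DNNP) for this instance; chaining with the identifications $\widehat{(SDR)} \leftrightarrow (\text{SDR2})$ and $\widehat{(DNNP)} \leftrightarrow (\text{DNNP})$ then gives the equivalence of $\widehat{(SDR)}$ and $\widehat{(DNNP)}$. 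The main obstacle is the ``padding'' step showing that $(x,X)=(0,U)$ is feasible for (SDR2) whenever $U$ is feasible for $\widehat{(SDR)}$: this relies crucially on the absence of linear equality constraints in (MC) (which leaves $x$ unconstrained) together with the bound $|U_{ij}|\leq 1$ that comes for free from $U\succeq 0$ and $U_{ii}=1$, and the analogous construction would fail for general (BQP) with $m\geq 1$.
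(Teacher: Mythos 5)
Your proof is correct, but it takes a genuinely different route from the paper. The paper proves Theorem \ref{theo: equivalence problem (SDP) and problem (DNNP)} directly and self-containedly: it maps an optimal $U^*$ of $\widehat{\mathrm{(SDR)}}$ to $(x,X)=(\frac{1}{2}e,\frac{1}{4}(U^*+ee^T))$, checks feasibility for $\widehat{\mathrm{(DNNP)}}$ (nonnegativity via $|U^*_{ij}|\leq 1$, the conic condition via $X-xx^T=\frac{1}{4}U^*\succeq 0$ and Schur complement), and conversely maps $(x^*,X^*)$ to $U=4X^*-2x^*e^T-2e(x^*)^T+ee^T$, establishing $U\succeq 0$ through the decomposition $U=4(X^*-x^*(x^*)^T)+(2x^*-e)(2x^*-e)^T$; equal objective values then give both inequalities between the optima. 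You instead specialize Theorem \ref{theo: equivalence problem (SDR2) and problem (DNNP)} to the instance $Q=-\frac{1}{4}L$, $c=0$, $m=0$, and supply the one genuinely new ingredient: the equivalence of the $m=0$ version of (SDR2) with the standard relaxation $\widehat{\mathrm{(SDR)}}$, via projection $(x,X)\mapsto X$ in one direction and the padding $(x,X)=(0,U)$ in the other, where the RLT-type inequalities $1+U_{ij}\geq 0$ come for free from $U\succeq 0$, $U_{ii}=1$. All your verifications check out (including nonemptiness, which Theorem \ref{theo: equivalence problem (SDR2) and problem (DNNP)} requires), and it is a pleasant observation that composing your two identifications reproduces exactly the paper's explicit maps: padding $U$ with $x=0$ and then applying (\ref{e: 2}) yields $z=\frac{1}{2}e$, $Z=\frac{1}{4}(U+ee^T)$, precisely the paper's substitution (\ref{eq:1}). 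What each approach buys: your reduction makes the max-cut result a true corollary of the general theorem and isolates \emph{why} the standard SDR suffices for (MC) --- the absence of linear equality constraints leaves $x$ free to be padded, and the sign bound $|U_{ij}|\leq 1$ absorbs the extra inequalities --- whereas the paper's direct proof avoids invoking the general theorem (and its implicit attainment of optima) and exhibits the solution correspondence in closed form. One cosmetic remark: since $Le=0$ for a Laplacian, the terms $x^TLe$ and $e^TLe$ in $\widehat{\mathrm{(DNNP)}}$ actually vanish, though neither your argument nor the paper's needs this.
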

\begin{proof}
On one hand, suppose that $U^*$ is an optimal solution for problem
$\rm\widehat{(SDR)}$, and let $X_{ij}=\frac{1}{4}(U^*_{ij}+1)$ and
$x_i=\frac{1}{2}$, $\forall 1\leq i\leq j\leq n$, i.e.,
\begin{equation}\label{eq:1}
X=\frac{1}{4}(U^*+ee^T),\ x=\frac{1}{2}e,
\end{equation} which imply that
\begin{equation}\label{eq:2}X_{ii}=\frac{1}{4}(U^*_{ii}+1)=x_i=\frac{1}{2}>0,\ \forall
1\leq i\leq n,
\end{equation} since $U_{ii}^*=1$. Then, from $U^*\succeq 0$, it follows that
$$0\leq U^*_{ii}U^*_{jj}-(U^*_{ij})^2=1-(U^*_{ij})^2,$$
i.e., $$-1\leq U^*_{ij}\leq 1,\ \forall 1\leq i<j\leq n,$$ combining
with (\ref{eq:1}), we have
\begin{equation}\label{eq:3}X_{ij}=\frac{1}{4}(U^*_{ij}+1)\geq 0,\ 1\leq i<j\leq
n.\end{equation} Moreover, from (\ref{eq:1}), it follows that
\begin{equation}\label{eq:4}X-xx^T=\frac{1}{4}(U^*+ee^T)-\frac{1}{4}ee^T=\frac{1}{4}U^*\succeq 0,\end{equation}
it followed by $U^*\succeq 0$.
Combining (\ref{eq:2}), (\ref{eq:3}) and (\ref{eq:4}) as well as
Theorem \ref{theo: Schur complement}(ii), it holds that $(x,X)$ is a
feasible solution for problem $\rm\widehat{(DNNP)}$. Again from
(\ref{eq:1}), we have $L\bullet
X-x^TLe+\frac{1}{4}e^TLe=\frac{1}{4}L\bullet
U^*=\rm{Opt}\widehat{(SDR)}$, which further imply that
$\rm{Opt}\widehat{(DNNP)}\geq\rm{Opt}\widehat{(SDR)}$.

On the other hand, suppose that $(x^*,X^*)$ is an optimal solution
for problem $\rm\widehat{(DNNP)}$, and let
\begin{equation}\label{eq:5}U=4X^*-2x^*e^T-2e(x^*)^T+ee^T,\end{equation} which imply that
\begin{equation}\label{eq:6}U_{ii}=4X_{ii}^*-2x^*_i-2x^*_i+1=1,\ \forall 1\leq i\leq n,\end{equation} since
$X_{ii}^*=x_i^*,\ \forall 1\leq i\leq n$.
From (\ref{eq:5}), it follows that
\begin{equation}\label{eq:7}\begin{array}{ll}U&=4X^*-2e(x^*)^T-2x^*e^T+ee^T\\
&=4(X^*-x^*(x^*)^T)+(2x^*-e)(2x^*-e)^T\succeq
0.\end{array}\end{equation} From (\ref{eq:6}) and (\ref{eq:7}), we
can conclude that $U$ defined by (\ref{eq:5}) is a feasible solution
for problem $\rm\widehat{(SDP)}$. Furthermore, again from
(\ref{eq:5}), we have
$$\frac{1}{4}L\bullet U=L\bullet
X^*-(x^*)^TLe+\frac{1}{4}e^TLe=\rm{Opt}\widehat{(DNNP)},$$ which
imply that $\rm{Opt}\widehat{(SDR)}\geq\rm{Opt}\widehat{(DNNP)}$.
The proof is completed.
\end{proof}

By Theorem \ref{theo: equivalence problem (SDP) and problem (DNNP)},
we can obtain doubly nonnegative relaxation for problem (MC) exactly
equal to the standard semidefinite relaxation, i.e. problem
$\rm{\widehat{(DNNP)}}$ and problem $\rm{\widehat{(SDR)}}$ are
equivalent according to Definition \ref{def: equivalence}, without
the boundedness assumption of two feasible sets of two problems.

%%%%%%%%%%%%%%%%%%%%%%%%%%%%%%%%%%%%%%%%%%%%%%%%%%%%%%%%%%%%%%%%%%%%%%%%%%%%%%%%%%%%%%%%%%%%%%%%%%%%%%%%%%%%%%%%%%%%%%%%%%%%%%%%%%%%%%%%%%
%\section{Numerical experiments}
%In this section, we report our preliminary numerical test results.
%To solve problem (DNP) we used \texttt{CVX}, a package for
%specifying and solving convex programs \cite{gb2011}. The software
%was implemented using MATLAB R2011b on Windows 7 platform, and on a
%PC with Intel(R) Core(TM) i3-2310M CPU 2.10 GHz.
%
%
%%%%%%%%%%%%%%%%%%%%%%%%%%%%%%%%%%%%%%%%%%%%%%%%%%%%%%%%%%%%%%%%%%%%%%%%%%%%%%%%%%%%%%%%%%%%%%%%%%%%%%%%%%%%%%%%%%%%%%%%%%%%%%%%%%%%%%%%%%%
\section{Conclusions}\label{sec: conclusions}
In this paper, a class of nonconvex binary quadratic programming
problem is considered, which is NP-hard in general. In order to
solve this problem efficiently by some popular packages for solving
convex programs, two convex representation methods are proposed. One
of the methods is semidefinite relaxation, by the structure of the
binary constraints of original problem, which results in a new
tighter semidefinite relaxation problem (SDR2). The other method is
doubly nonnegative relaxation. The original problem is equivalently
transformed into a convex problem (CPP), which is also NP-hard in
general. Then, by virtue of the features of constraints in this
problem, a computable convex problem (DNNP) is obtained through
doubly nonnegative relaxation. Moreover, the two convex relaxation
problems are equivalent. These results are applied to (MC) problem,
we can conclude that doubly nonnegative relaxation for problem (MC)
is equivalent to the standard semidefinite relaxation for it.
Furthermore, some compared numerical results are reported to show
the performance of two relaxed problems.

%%%%%%%%%%%%%%%%%%%%%%%%%%%%%%%%%%%%%%%%%%%%%%%%%%%%%%%%%%%%%%%%%%%%%%%%%%%%%%%%%%%%%%%%%%%%%%%%%%%%%%%%%%%%%%%%%%%%%%%%%%%%%%%%%%%%%%%%%%%%
%\section{Acknowledgements} The authors thank two anonymous referees
%and the associate editor for their helpful comments on this paper.

%%%%%%%%%%%%%%%%%%%%%%%%%%%%%%%%%%%%%%%%%%%%%%%%%%%%%%%%%%%%%%%%%%%%%%%%%%%%%%%%%%%%%%%%%%%%%%%%%%%%%%%%%%%%%%%%%%%%%%%%%%%%%%%%%%%%%%%%%%%%
\bibliographystyle{model1-num-names}
\bibliography{references_full_name}
\end{document}